\newcommand{\measurerestr}{%
  \,\raisebox{-.127ex}{\reflectbox{\rotatebox[origin=br]{-90}{$\lnot$}}}\,%
}
\newcommand{\M}{\mathbb{M}}
\newcommand{\F}{\mathbb{F}}
\newcommand{\R}{\mathbb{R}}
\newcommand{\Le}{\mathscr{L}}
\newcommand{\T}{\mathbb{T}}
\newcommand*\circled[1]{\tikz[baseline=(char.base)]{
            \node[shape=circle,draw,inner sep=2pt] (char) {#1};}}
\newtheorem{definition}{Definition}[section]
\newtheorem*{theorem*}{Theorem}
\newtheorem{theorem}{Theorem}[section]
\newtheorem{lemma}{Lemma}[section]
\newtheorem{remark}{Remark}
\numberwithin{equation}{section}
\providecommand{\abstract}{}
\date{}
\title[A current based approach for the continuity equation]{A current based approach for the uniqueness of the continuity equation}
\author{Tommaso Cortopassi}
\thanks{Scuola Normale Superiore, 56126 Pisa, Italy. E-mail: tommaso.cortopassi@sns.it}
\begin{document}

\maketitle

\begin{abstract}
     \noindent We consider the problem of proving uniqueness of the solution of the continuity equation with a vector field $u \in [L^1 (0,T; W^{1,p}(\T^d)) \cap L^\infty ((0,T) \times \T^d)]^d$ with $\div(u) ^- \in L^1 (0,T; L^\infty (\T^d))$ and an initial datum $\rho_0 \in L^q (\T^d)$, where $\T^d$ is the $d$-dimensional torus and $ 1 \leq p,q \leq +\infty$ such that $1/p + 1/q =1$ without using the theory of renormalized solutions introduced in \cite{diperna1989ordinary}. We propose a more geometric approach which will however still rely on a strong $L^1$ estimate on the commutator (which ultimately is the key technichal tool in \cite{diperna1989ordinary}, too), but other than that will be based on the theory of currents. 
\end{abstract}

\section{Introduction}
\noindent The aim of this work is to study the uniqueness of solutions of the continuity equation with a vector field $u \in [L^1 (0,T; W^{1,p}(\T^d)) \cap L^\infty ((0,T) \times \T^d)]^d$ and such that $ \div(u)^- \in L^1 (0,T ; L^\infty (\T^d))$, where $\T^d$ is the $d$-dimensional torus. This problem has already been extensively studied, starting from the seminal paper \cite{diperna1989ordinary} and later extended in \cite{ambrosio2004transport}, and the fundamental notion in their approach is that of ``renormalized solution". Another classical solution to the problem is obtained by using the duality method with the transport equation \cite[Lecture XVI]{ambrosio2021lectures}, where a more geometric point of view is also employed since a key step of the proof considers the pull back via the flow of the velocity field. We propose a different approach that relies on a more geometric argument, namely using the theory of currents, that will however still rely crucially on a commutator estimate which is the main technical point also in \cite{diperna1989ordinary} and \cite{ambrosio2021lectures}. We point out that recently a link between currents and the continuity equation has also been proposed in \cite{bonicatto2024existence} where they prove the well-posedness of a generalisation to $k$-currents of the continuity and of the transport equation in $\R^d$ (which are obtained in the cases $k=0$ and $k=d$ respectively). For these reasons, we believe that a more geometric point of view on the subject may be of interest for future developments. Consider the continuity equation
 
\begin{equation}\label{continuity equation}
\tag{PDE}
 \begin{cases}
\partial_t \rho (t,x) + \div_x (u(t,x) \rho (t,x))=0 &\text{ in } (0,1) \times \mathbb{T}^d\\
\rho(0,x)= \rho_0 (x) &\text{ in }  \mathbb{T}^d 
 \end{cases}
 \end{equation}
with $\rho_0  \in L^q (\mathbb{T}^d)$ and $u \in [L^1 (0,1; W^{1,p} (\T ^d )) \cap L^ \infty ((0,1) \times \T^d)]^d$, $1\leq p,q \leq + \infty$ with $1/p + 1/q = 1$. We assume without loss of generality that the final time is $T=1$. This is clearly not restrictive and it will avoid ambiguity in the following, since the letter $T$ will be reserved for another object, namely a normal 1-current representing the solution of \eqref{continuity equation}. We will restrict to the Sobolev regularity for simplicity, but as noted in Remark 5 the same argument can be made to work in the BV case, too. First of all we define the notion of solution for \eqref{continuity equation}.
\newline
\begin{definition}{\textbf{Solution of the continuity equation}}

    We say that a function $\rho \in L^\infty (0,1; L^q (\T^d))$ is a (distributional) solution to \eqref{continuity equation}, with $u \in L^1 (0,1; W^{1,p} (\T^d))$ and $\rho_0 \in L^q (\T^d)$, if

    $$ \int_0 ^1 \int_{\T^d} \rho( \partial_t \xi   + \langle u , \nabla  \xi \rangle) dx dt +  \int_{\T^d} \xi(0 , \cdot ) \rho_0 (x) dx   =0 \quad \forall \xi \in C^{\infty} _c ([0,1) \times \T^d) .$$
\end{definition}

\noindent
The starting point of our approach is to notice that $\rho$ solving $\eqref{continuity equation}$ can be written in an equivalent formulation with currents. In order to keep this work as self contained as possible, we provide in the Appendix the definition of $1$-current and of all the relevant tools and terminology necessary for the proof. For an extensive treatment on the subject we point the interested reader to \cite{federer2014geometric} and \cite{simon1983lectures}. 
 
 \begin{definition}{\textbf{Solution in the sense of currents}}\label{definition solution sense of currents}
 
A normal $1$-current $T$ in $(- \infty, 1) \times \T^d$ and supported in $[0,1) \times \T^d$ is said to be solution in the sense of currents of \eqref{continuity equation} if $\exists \rho \in L^{\infty} ( 0,1 ; L^q ( \T^d) ) $ such that 

\begin{equation}\label{continuity equation with current}
\tag{C-PDE}
 T = \rho e_t + \rho u^i e_i \text{ and } \partial T= -\rho_0 \mathcal{H}^{d} \measurerestr (\{0\} \times \T ^d) ,
 \end{equation}

where the boundary is considered with respect to $(- \infty, 1) \times \T^d$; $e_t$ is the unitary vector in the time direction; $e_1, e_2, \dots, e_d$ are unitary vectors in the $x_i$ directions and the $u^i$'s are the components of $u$.
\end{definition}

\begin{remark}

    Notice that we will be always interested in currents in $(- \infty, 1) \times \T^d$ and supported in $[0,1) \times \T^d$. The reason is that the definition of the boundary of a current is better given on open sets, and this is why  we only consider test functions in $C^\infty _c ([0,1) \times \T^d)$.
    \end{remark}

    \begin{remark}\label{remark consistent defnition of current solution}
    Let us check that the above definition is consistent. We have:

    \begin{align*}
    \langle \partial T, \xi\rangle = \langle T, d \xi \rangle = \int_0 ^1 \int_{\T^d} \rho [\partial_t \xi  + u \cdot \nabla _x \xi ] dx dt \qquad \forall \xi \in C_c ^\infty ([0,1) \times \T^d).
    \end{align*}

    Integrating by parts the first addendum:

    \begin{align*}
        \int_0 ^1 \int_{\T^d} \rho \partial _t \xi dx dt&= \int_{\T^d}\left( [\rho \xi ]_0 ^1 - \int_0 ^1 \xi \partial_t \rho dt\right) dx \\
        &= -\int_{\T^d} \rho_0 \xi dx - \int_0 ^1 \int_{\T^d} \xi \partial_t \rho dx dt.
    \end{align*}

    As for the second addendum, we use that 

    $$ \div_x (\rho u \xi )= \div_x (\rho u) \xi + \rho u \cdot \nabla _x \xi \text{ in the sense of distributions,}$$

    so 

    $$ \int_0 ^1 \int_{\T^d} \rho u \cdot \nabla_x \xi dx dt = - \int_0 ^1 \int_{\T^d} \xi \div_x (u \rho) dx dt .$$

    Using that $\rho$ solves \eqref{continuity equation} by hypothesis, we easily see that

    $$ \langle \partial T, \xi \rangle = - \int_{\T^d} \rho_0 \xi dx = \langle -\rho_0 \mathcal{H}^{d} \measurerestr (\{0\} \times \T ^d) ,\xi\rangle.$$
\end{remark}
\begin{remark}\label{repeated indices sum remark}
    Everywhere in the paper, if not otherwise specified, repeated indices are summed. For example $u^i e_i \coloneqq  \sum_{i=1} ^{d} u^i e_i$.
\end{remark}

Since the main issue with \eqref{continuity equation} is the uniqueness, we will give for granted the existence part which is obtained by a standard procedure of approximation by mollification of the velocity field. Given $u$ a velocity field satisfying 

\begin{equation}\label{main hypotheses on u}
\begin{cases}
    u \in [L^1 (0,1; W^{1,p} (\T^d))\cap L^\infty ((0,1) \times \T^d)]^d \\
    \div(u)^- \in L^1 (0,1; L^\infty (\T^d)) ,    
\end{cases}
\end{equation}

we will prove uniqueness in the following class:

\begin{equation}\label{main hypotheses on rho}
\begin{cases} 
    \rho_0 \in L^q (\T^d)\\
    \rho \in L^\infty (0,1; L^q (\T^d)), \text{ where } 1 \leq p,q \leq + \infty \text{ and }\frac{1}{p} + \frac{1}{q} =1. \\
\end{cases}
\end{equation}

In general, given a 1-current of the form $T= f_t e_t + f_1 e_1 + \dots + f_d e_d$ with $f_t , f_1, \dots, f_d \in L^1 _{loc} ((0,1) \times \T^d)$ we will use the following terminology:

\begin{equation}\label{horizontal and vertical parts definition}
\begin{cases} 
T^v \coloneqq f_1 e_1 + \dots +f_d e_d  &\text{will be called the ``vertical" part of } T \\
T^h \coloneqq f_t e_t &\text{will be called the ``horizontal" part of }T.  
\end{cases}
\end{equation}

\begin{remark}
    We will work on the torus and with $u$ bounded in order to avoid technical complications derived from having unbounded domains or an unbounded velocity field. 
\end{remark}

Before starting to prove the necessary preliminary lemmas, let us give an idea of the strategy of the proof. The key point is that if we have a $1$-current $T= f_t e_t + f_i e_i$ with null boundary in $(- \infty , 1) \times \T^d$ and supported on $[0,1) \times \T^d$, we can estimate its flat norm with its ``vertical mass", i.e. with $\sum_{i=1} ^d ||f_i||_{L^1 ((0,1) \times \T^d)} $ (see Lemma \ref{flat norm estimate null boundary} for a more precise statement). Then the idea is that if we have a solution $T= \rho e_t +\rho u^i e_i $ of \eqref{continuity equation with current} we can change variables in such a way that the resulting current is arbitrarily ``straightened", i.e. its vertical mass can be made arbitrarily small, and defining this straightening will be the content of Lemma \ref{lemma di raddrizzamento}. This, together with strong $L^1$ estimates on the commutator (see Lemma \ref{lemma commutator estimate}) will imply that the flat norm of $T$ is arbitrarily small and thus $T=0$.

\section{Proof of uniqueness}

We will work considering a 1-current $T= \rho e_t + \rho u^i e_i$ satisfying \eqref{continuity equation with current} with null boundary in $(- \infty, 1) \times \T^d$ and $\rho, u$ satisfying \eqref{main hypotheses on u} and \eqref{main hypotheses on rho} respectively. This is equivalent to asking that $\rho$ solves \eqref{continuity equation} with a null initial datum. We want to prove that $\rho=0$, which by the linearity of the equation is equivalent to proving uniqueness. Let us start with a series of general representation lemmas for $1$-currents with null boundary.

\begin{lemma}\label{flat norm estimate null boundary}

Let $T= f_t e_t + f_j e_j$ be a normal $1$-current such that $\partial T=0$ in $(- \infty , 1) \times \T^d$, supported in $[0,1) \times \T^d$  and such that $f_t , f_j \in L^\infty (0,1 ; L^1 (\T^d))$ for every $j=1, \dots,d$. Then $T$ is the boundary of a 2-current $S$ in $(- \infty , 1) \times \T^d$ defined as

$$ S \coloneqq \sum_{j=1} ^d F_j (t,x) e_t \wedge e_j ,$$

where 
\begin{align*}
     F_j (t,x) \coloneqq \begin{cases}
         -\int_0 ^t f_j (s,x) ds &\text{if } t >0\\
         0 &\text{otherwise}
     \end{cases}.
\end{align*}
 As a consequence, it holds the flat norm estimate 

$$ \F(T) \leq  \M (T^v) $$

where $T^v = f_1 e_1 + \dots f_d e_d$ is the ``vertical part" of $T$, as denoted in \eqref{horizontal and vertical parts definition}.
\end{lemma}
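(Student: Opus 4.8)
The plan is to establish the identity $T=\partial S$ by a direct distributional computation, from which the flat norm bound follows almost for free. First I would test the candidate boundary against an arbitrary smooth $1$-form $\omega = \omega_t\,dt + \omega_j\,dx_j$ with compact support in the open set $(-\infty,1)\times\T^d$. Since $S$ carries only the mixed components $e_t\wedge e_j$, the pairing $\langle\partial S,\omega\rangle = \langle S,d\omega\rangle$ detects solely the coefficient of $dt\wedge dx_j$ in $d\omega$, namely $\partial_t\omega_j - \partial_{x_j}\omega_t$. Hence $\langle\partial S,\omega\rangle = \sum_{j}\int F_j\,(\partial_t\omega_j - \partial_{x_j}\omega_t)\,dt\,dx$.

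Next I would integrate by parts in each term. In time, using $\partial_t F_j = -f_j\mathbbm{1}_{\{t>0\}}$ (observe $F_j$ is continuous at $t=0$, so no jump appears), that $\omega$ vanishes for $t$ near $1$, and that $F_j\equiv 0$ for $t\le 0$, all boundary terms drop and $\int F_j\partial_t\omega_j = \int_0^1\int_{\T^d} f_j\omega_j$. In the space variables, integration by parts on the torus produces no boundary terms, giving $-\int F_j\partial_{x_j}\omega_t = \int(\partial_{x_j}F_j)\omega_t$ with $\sum_j\partial_{x_j}F_j(t,x) = -\int_0^t \div_x f^v(s,x)\,ds$. The hypothesis $\partial T=0$ now enters: tested against functions it is equivalent to $\partial_t f_t + \div_x f^v = 0$ in $\mathcal{D}'((-\infty,1)\times\T^d)$. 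Because $T$ is supported in $[0,1)\times\T^d$ the coefficients vanish for $t<0$, and the fact that this equation holds across $t=0$ forces the initial trace $f_t(0^+,\cdot)=0$, i.e. no mass is concentrated on $\{0\}\times\T^d$. Substituting $\div_x f^v = -\partial_t f_t$ and applying the fundamental theorem of calculus to the weakly time-continuous representative gives $\sum_j\partial_{x_j}F_j = \int_0^t\partial_t f_t\,ds = f_t(t,x)-f_t(0^+,x) = f_t(t,x)$. Collecting the two contributions yields $\langle\partial S,\omega\rangle = \int_0^1\int_{\T^d}(f_t\omega_t + f_j\omega_j)\,dt\,dx = \langle T,\omega\rangle$, that is $T=\partial S$.

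The flat norm estimate is then immediate. By definition of the flat norm, choosing $S$ as competitor gives $\F(T)\le \M(T-\partial S)+\M(S) = \M(S)$. Since $\{e_t\wedge e_j\}_{j}$ is orthonormal, $\M(S) = \int_{(0,1)\times\T^d}|F(t,x)|\,dt\,dx$ with $|F| = (\sum_j F_j^2)^{1/2}$. Minkowski's integral inequality yields $|F(t,x)| = \bigl|\int_0^t f^v(s,x)\,ds\bigr| \le \int_0^t |f^v(s,x)|\,ds$, and Fubini gives $\int_0^1|F(t,x)|\,dt \le \int_0^1(1-s)\,|f^v(s,x)|\,ds \le \int_0^1 |f^v(s,x)|\,ds$; integrating in $x$ produces $\M(S)\le\M(T^v)$, which in particular shows that $S$ has finite mass and is a legitimate competitor.

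The step I expect to be the main obstacle is the justification of the initial-trace identity $f_t(0^+,\cdot)=0$: one must argue that the weak form of $\partial T=0$, valid on an open set containing $\{0\}\times\T^d$, together with the support condition, excludes any concentration of $f_t$ at $t=0$ and legitimizes the fundamental theorem of calculus used above at the available $L^\infty(0,1;L^1(\T^d))$ regularity (the coefficients are only weakly continuous in time). Everything else reduces to the two routine integrations by parts carried out in the second paragraph.
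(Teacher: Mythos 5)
Your proposal has the same skeleton as the paper's proof---the identity $T=\partial S$ is verified by exactly the two integrations by parts you describe, and the flat norm bound then follows by taking $S$ as competitor (this last part, which the paper leaves implicit in ``the thesis easily follows'', you actually spell out more carefully than the paper does). The divergence is in how the low regularity is handled, and that is precisely where your writeup has a genuine gap, which you yourself flag: you apply the fundamental theorem of calculus to $\int_0^t\partial_t f_t\,ds$ and invoke an initial trace $f_t(0^+,\cdot)=0$ at $L^\infty(0,1;L^1(\T^d))$ regularity, but as written $\div_x f^v=-\partial_t f_t$ is only a distribution, $\partial_{x_j}F_j$ need not be a function, and the ``weakly time-continuous representative'' is never constructed, so the central step is asserted rather than proved. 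The paper avoids exactly this issue by mollifying $f_t,f_j$ in space--time: for $T_\varepsilon$ the null-boundary condition becomes the pointwise identity $\partial_t f_t^\varepsilon+\sum_j\partial_j f_j^\varepsilon=0$, the vanishing initial value $f_t^\varepsilon(0,\cdot)=0$ is an honest pointwise statement (established in the remark following the lemma by first testing against $\xi\in C_c^\infty((0,1-\varepsilon)\times\T^d)$), the FTC is classical, and one concludes by letting $\varepsilon\to0$.

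The good news is that your direct route can be closed without mollification and without ever invoking a pointwise trace. In the spatial term, do not differentiate $F_j$; instead use Fubini to move the time integral onto the test form:
\begin{align*}
-\sum_{j=1}^d\int_0^1\int_{\T^d} F_j\,\partial_{x_j}\omega_t\,dx\,dt
&=\sum_{j=1}^d\int_0^1\int_{\T^d}\Bigl(\int_0^t f_j(s,x)\,ds\Bigr)\partial_{x_j}\omega_t(t,x)\,dx\,dt\\
&=\sum_{j=1}^d\int_0^1\int_{\T^d} f_j(s,x)\,\partial_{x_j}\Omega(s,x)\,dx\,ds,
\end{align*}
where $\Omega(s,x)\coloneqq\int_s^1\omega_t(t,x)\,dt$ is smooth, satisfies $\partial_s\Omega=-\omega_t$, and vanishes for $s$ near $1$; after a cutoff in the far past (harmless, since $T$ is supported in $[0,1)\times\T^d$) it is an admissible test function, so $\langle\partial T,\Omega\rangle=0$ yields $\sum_j\int f_j\partial_{x_j}\Omega=-\int f_t\,\partial_s\Omega=\int f_t\,\omega_t$, which is exactly the identity your trace-plus-FTC argument was meant to produce. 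With that replacement your proof is complete and genuinely different in technique from the paper's (no regularization at all). One minor point: your formula $\M(S)=\int|F|$ with the Euclidean norm does not match the paper's componentwise definition of mass, under which $\M(S)\le\sum_j\|F_j\|_{L^1}$ and $\M(T^v)=\sum_j\|f_j\|_{L^1}$; your Fubini bound gives $\M(S)\le\M(T^v)$ under either convention, so nothing breaks, but the constants should be stated consistently with the paper's definitions.
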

\begin{proof}
    Fix $\varepsilon >0$ and consider the mollified functions 
    $$ f_t ^\varepsilon \coloneqq \theta_\varepsilon \ast f_t \text{ and } f_j ^\varepsilon \coloneqq \theta_\varepsilon \ast f_j $$

    where $\theta_\varepsilon (t,x) \coloneqq \varepsilon^{-d-1} \theta\left( t/\varepsilon, x/\varepsilon \right)$ and $\theta$ is a standard mollifier in $\R^{d+1}$. By Remark \ref{remark consistent defnition of current solution} it can be easily seen that the current $T_\varepsilon =f_t ^\varepsilon e_t + f_j ^\varepsilon e_j$ has null boundary in $(- \infty , 1- \varepsilon) \times \T^d$. We remark that, as pointed out in Remark \ref{repeated indices sum remark}, we use the convention that repeated indices are summed if not otherwise specified. We also define

    $$ S_\varepsilon  \coloneqq \sum_{j=1} ^d F_j ^\varepsilon (t,x) e_t \wedge e_j , \text{ where }
     F_j ^\varepsilon (t,x) \coloneqq \begin{cases}
         -\int_0 ^t f_j ^\varepsilon (s,x) ds &\text{if } t >0\\
         0 &\text{otherwise}
     \end{cases}.
 $$

    Let $\omega (t,x)= \tau (t,x) dt + \xi^j (t,x)  dx_j$ with $\tau, \xi^j \in C_c ^\infty ([0,1-\varepsilon) \times \T^d)$ for every $j =1, \dots, d$ be a $1$-form.  We want to show that

    $$ \partial S_\varepsilon (\omega)= T_\varepsilon (\omega) = \int_0 ^{1-\varepsilon} \int_{\T^d} f_t ^\varepsilon \tau +  f_j ^\varepsilon \xi^j dx dt \qquad \forall \tau , \xi^1 , \dots, \xi^d \in C_c ^\infty ([0,1- \varepsilon) \times \T^d).$$

    By definition, 

    \begin{align*} \partial S_\varepsilon (\omega)&=  S_\varepsilon (d \omega)= S_\varepsilon \left(  \partial_t \xi^j dt \wedge dx_j - \partial_j \tau dt \wedge dx_j \right)\\
    &= S_\varepsilon (\partial_t \xi^j dt \wedge dx_j) - S_\varepsilon ( \partial_j \tau dt \wedge dx_j ). 
    \end{align*}

    The first addendum on the right hand side is:

    \begin{align} \label{first addendum}
    &S_\varepsilon( \partial_t \xi^j dt \wedge dx_j) = \int_0 ^{1- \varepsilon} \int_{\T^d}  F_j ^\varepsilon (t,x) \partial_t \xi^j (t,x) dx dt= \int_{\T^d}\int_0 ^{1- \varepsilon}   F_j ^\varepsilon (t,x) \partial_t \xi^j (t,x) dt dx \\
    &=\int_{\T^d}  \left([F_j ^\varepsilon (t,x)  \xi^j (t,x)]_0 ^{1- \varepsilon}  - \int_0 ^{1- \varepsilon}  \partial_t F_j ^\varepsilon (t,x)  \xi^j (t,x) dt \right) dx \nonumber \\
    &=  \int_0 ^{1- \varepsilon} \int_{\T^d} f_j ^\varepsilon \xi^j dx dt, \nonumber
    \end{align}

    where we use the fact that $F_j ^\varepsilon (0, \cdot)\equiv \xi^j (1- \varepsilon, \cdot) \equiv 0$ for every $j$. As for the second addendum:
    
    \begin{align}\label{second addendum}
         & - S_\varepsilon ( \partial_j \tau dt \wedge dx_j )=-\int_0 ^{1- \varepsilon} \int_{\T^d} F_j ^\varepsilon (t,x) \partial_j \tau (t,x) dx dt\\
         &= -\int_0 ^{1- \varepsilon} \int_{\T^d} \left( - \int_0 ^t f_j ^\varepsilon (s,x) ds \right) \partial_j \tau (t,x) dx dt \nonumber \\
         &=\int_0 ^{1- \varepsilon} \int_{\T^d} \int_0 ^t f_j ^\varepsilon (s,x)  \partial_j \tau (t,x) ds dx dt \nonumber \\
         &= \int_0 ^{1- \varepsilon} \int_{\T^d} \left(- \int_0 ^t \sum_{j=1} ^{d}\partial_j  f_j ^\varepsilon (s,x) ds \right) \tau (t,x) dx dt. \nonumber 
         \end{align}

    Since $\partial T_\varepsilon =0$ in $(- \infty , 1- \varepsilon) \times \T^d$ we have that, in a pointwise sense, $\partial_t f_t ^\varepsilon + \sum_{j=1} ^d  \partial_j f_j ^\varepsilon =0$ in $(- \infty,1- \varepsilon) \times \T^d$. Then 

    \begin{align}\label{second addendum final version}
         & - S_\varepsilon ( \partial_j \tau dt \wedge dx_j )= \int_0 ^{1- \varepsilon} \int_{\T^d} \left( \int_0 ^t \partial_t f_t ^\varepsilon (s,x) ds\right) \tau (t,x) dx dt \\
         &= \int_0 ^{1- \varepsilon} \int_{\T^d} f_t ^\varepsilon (t,x) \tau(t,x) dx dt.\nonumber 
    \end{align}
    
   Combining \eqref{first addendum} and \eqref{second addendum final version} we conclude that $\partial S_\varepsilon = T_\varepsilon$, and the thesis easily follows by letting $\varepsilon \to 0$.
\end{proof}

\begin{remark}
    Notice that in \eqref{second addendum final version} we implicitly used the fact that $f_t ^\varepsilon (0, \cdot)=0$. This can be deduced arguing as in Remark \ref{remark consistent defnition of current solution}. If we follow the same steps with $T_\varepsilon = f_t ^\varepsilon e_t + f_j ^\varepsilon e_j$, where we consider $f_t ^\varepsilon$ in place of $\rho$ and $f_j ^\varepsilon$ in place of $\rho u^j$, we have that for every $\xi \in C^\infty _c ([0,1- \varepsilon) \times \T^d)$:

    $$0=\langle  \partial T_\varepsilon, \xi \rangle = -\int_0 ^{1- \varepsilon} \int_{\T^d} \left(\partial_t f_t ^\varepsilon + \sum_{j=1} ^{d} \partial_j f_j ^\varepsilon \right) \xi dx dt - \int_{\T^d} f_t ^\varepsilon (0, x) \xi(0,x) dx.$$

    Restricting to $\xi \in C^\infty _c ((0,1- \varepsilon) \times \T^d)$ we can see that the double integral on the right hand side is 0, and then also $f_t ^\varepsilon(0, \cdot)=0$.
\end{remark}

\begin{lemma}\label{construction of 1-horizontal current with given boundary}

Let $T$ be a normal 1-current such that $\partial T= g$ in $(- \infty , 1) \times \T^d$ with $g \in L^1 ((0,1) \times \T^d)$ , and with $ \operatorname{Supp}(T) \subseteq [0,1) \times \T^d$ . Then there exists a horizontal (i.e. a current completely in the $e_t$ direction) normal $1$-current $M$ such that $\partial M =g$ in $(- \infty , 1) \times \T^d$, with support in $[0,1) \times \T^d$ and satisfying

$$ \M (M) \leq ||g||_{L^1 ((0,1) \times \T^d)}.$$

Moreover, if $\partial T$ is supported in a set $[0,1) \times K$ for some $K$, so is $M$.
\end{lemma}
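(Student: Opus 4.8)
The plan is to construct $M$ explicitly as the time-primitive of $g$, in the same spirit as the $2$-current $S$ built in Lemma \ref{flat norm estimate null boundary}. Concretely, I set $M \coloneqq m(t,x)\, e_t$, where
$$ m(t,x) \coloneqq \begin{cases} -\int_0^t g(s,x)\,ds & \text{if } 0<t<1, \\ 0 & \text{otherwise.} \end{cases}$$
Since $g\in L^1((0,1)\times\T^d)$, Fubini guarantees that $g(\cdot,x)\in L^1(0,1)$ for a.e.\ $x$, so $t\mapsto m(t,x)$ is absolutely continuous with $\partial_t m(t,x)=-g(t,x)$ for a.e.\ $t$ and $m(0,x)=0$. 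By construction $M$ is horizontal (purely in the $e_t$ direction) and supported in $[0,1)\times\T^d$.

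To bound the mass I would estimate $\M(M)=\int_0^1\int_{\T^d}|m|\,dx\,dt \le \int_0^1\int_{\T^d}\int_0^t |g(s,x)|\,ds\,dx\,dt$, and then apply Tonelli in the $(s,t)$ variables to rewrite the inner double integral as $\int_0^1 |g(s,x)|(1-s)\,ds$. Since $1-s\le 1$, this yields $\M(M)\le \int_{\T^d}\int_0^1 |g(s,x)|\,ds\,dx = \|g\|_{L^1((0,1)\times\T^d)}$. In particular $\M(M)<\infty$, and together with $\M(\partial M)=\M(g)=\|g\|_{L^1((0,1)\times\T^d)}<\infty$ this shows that $M$ is normal.

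It then remains to verify that $\partial M = g$ in $(-\infty,1)\times\T^d$. For a test function $\phi\in C_c^\infty([0,1)\times\T^d)$ one has $\langle\partial M,\phi\rangle=\langle M,d\phi\rangle=\int_0^1\int_{\T^d} m\,\partial_t\phi\,dx\,dt$. Integrating by parts in $t$ at a.e.\ fixed $x$, which is legitimate because $m(\cdot,x)$ is absolutely continuous, the boundary contributions vanish: at $t=0$ because $m(0,x)=0$, and at $t=1$ because $\phi$ is compactly supported in $[0,1)$. This leaves $-\int_0^1 \partial_t m\,\phi\,dt=\int_0^1 g\,\phi\,dt$, and integrating in $x$ gives $\langle\partial M,\phi\rangle=\int_0^1\int_{\T^d} g\phi\,dx\,dt=\langle g,\phi\rangle$, as required. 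If one prefers to avoid the pointwise integration by parts with merely $L^1$ data, the same identity follows by first mollifying $g$ and passing to the limit, exactly as in Lemma \ref{flat norm estimate null boundary}.

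The refinement on the support is then immediate from the formula for $m$: if $g(s,x)=0$ for a.e.\ $x\notin K$, then $m(t,x)=-\int_0^t g(s,x)\,ds=0$ for a.e.\ $x\notin K$ as well, so $\operatorname{Supp}(M)\subseteq[0,1)\times K$. I do not expect any serious obstacle here, since the construction is a direct antiderivative in time and every estimate reduces to a one-line Tonelli computation. The only point requiring a little care is the justification of the integration by parts across $t=0$ with $L^1$ data, which is precisely why the primitive is taken from $0$ so that $m(0,\cdot)=0$, and which can in any case be bypassed by the mollification argument already employed above.
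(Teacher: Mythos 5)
Your proposal is correct and follows essentially the same route as the paper: the paper's proof also defines $M = -G(t,x)\,e_t$ with $G(t,x)=\int_0^t g(s,x)\,ds$ for $t>0$ and verifies $\partial M = g$ by the same integration by parts in time, treating the mass bound and the support refinement as immediate from the construction. Your additional details (the Tonelli computation giving the factor $1-s\leq 1$, the absolute continuity of $t\mapsto m(t,x)$ justifying the integration by parts, and the mollification fallback) merely flesh out steps the paper declares trivial.
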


\begin{proof}
     Consider 

    $$ M(t,x) \coloneqq -G(t,x) e_t \; \text{ with } \;G(t,x)= \begin{cases} \int_0 ^t g(s,x) ds & \text{if } t>0 \\
    0 & \text{otherwise}
    \end{cases}.$$

     Given $\xi \in C^\infty _c ([0,1) \times \T^d)$:

    $$ \langle \partial M , \xi \rangle = \langle M , d \xi \rangle = \int_{\T^d} \left(\int_0 ^1  -G(t,x) \partial_t \xi (t,x) dt \right) dx.$$

    But 

    \begin{align*} 
    \int_0 ^1 -G(t,x) \partial_t \xi (t,x) dt &= \overbrace{[ -G(t,x)\xi(t,x)]_{0} ^{1}}^{=0} + \int_0 ^1 \partial_t G(t,x) \xi(t,x) dt\\
    &= \int_0 ^1 g (t,x) \xi(t,x) dt ,
    \end{align*}

    so $\partial G= g$. If $g$ is supported in $[0,1) \times K$, it is clear from the construction that $M$ is supported in $[0,1) \times K$ too, and the mass norm estimate is trivial by construction. 
\end{proof}

 Let 
   
   \begin{equation}\label{regularised velocity and density} 
   u_\delta (t, \cdot) \coloneqq  \eta_\delta \ast u (t, \cdot) \text{ and } \rho_\delta (t, \cdot)\coloneqq \eta_\delta \ast \rho (t, \cdot)
   \end{equation}

   with $\eta_\delta (x) \coloneqq \delta ^{-d} \eta (x/\delta)$ where $\eta$ is a standard mollifier in the space variables only. Now, consider the current 
   
   \begin{equation}\label{regularised current}
   T_\delta \coloneqq \rho_\delta e_t + \rho_\delta  u_\delta ^i e_i.
   \end{equation}

  As already anticipated, the following commutator estimate will be crucial. Notice that contrary to \cite{diperna1989ordinary} where they only regularise the density $\rho$, we regularise both $\rho$ and the velocity field $u$ since we will need to use the flow of $u_\delta$ (see Lemma \ref{lemma di raddrizzamento}). However, as it will be shown in the proof, mollifying also the velocity does not create any problem.
   
   \begin{lemma}\label{lemma commutator estimate}
    Let $\rho$ be a solution of \eqref{continuity equation} with velocity $u$ and $\rho_0 =0$, and let $u, \rho$ satisfy assumptions \eqref{main hypotheses on u} and \eqref{main hypotheses on rho}. Then it holds

       $$ \partial_t \rho_\delta+ \div(u_\delta \rho_\delta )+ r_\delta=0 , $$

       where $r_\delta \coloneqq \eta_\delta \ast \div(u\rho) -\div (u_\delta \rho_\delta) \to 0$ as $\delta \to 0$ strongly in $L^1 ((0,1) \times \T^d) $.
   \end{lemma}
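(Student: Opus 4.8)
The plan is to separate the essentially formal verification of the stated identity from the substantive claim, namely the strong $L^1$ convergence $r_\delta\to 0$, on which all the work concentrates. Since $\eta_\delta$ acts only in the space variable, convolution commutes with both $\partial_t$ and $\div_x$; applying $\eta_\delta\ast$ to \eqref{continuity equation} (with $\rho_0=0$) gives $\partial_t\rho_\delta+\eta_\delta\ast\div(u\rho)=0$ in $(0,1)\times\T^d$, and adding and subtracting $\div(u_\delta\rho_\delta)$ yields exactly $\partial_t\rho_\delta+\div(u_\delta\rho_\delta)+r_\delta=0$ with $r_\delta=\eta_\delta\ast\div(u\rho)-\div(u_\delta\rho_\delta)$. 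So it only remains to prove $r_\delta\to0$ strongly in $L^1((0,1)\times\T^d)$. I would split
$$ r_\delta=c_\delta+d_\delta,\qquad c_\delta:=\eta_\delta\ast\div(u\rho)-\div(u\rho_\delta),\qquad d_\delta:=\div\big((u-u_\delta)\rho_\delta\big), $$
where $c_\delta$ is the classical commutator in which only the density is mollified, while $d_\delta$ collects the extra error produced by also mollifying the field.

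For the classical term, moving the divergence onto the kernel gives, for a.e.\ fixed $t$, the identities $\eta_\delta\ast\div(u\rho)(x)=\int(\partial_i\eta_\delta)(x-y)\,u^i(y)\rho(y)\,dy$ and $\div(u\rho_\delta)(x)=\rho_\delta\,\div u+u^i(x)\int(\partial_i\eta_\delta)(x-y)\rho(y)\,dy$, whence $c_\delta(x)=\int(\partial_i\eta_\delta)(x-y)\big(u^i(y)-u^i(x)\big)\rho(y)\,dy-\rho_\delta\,\div u$. The substitution $y=x-\delta z$ rewrites the integral as $\int(\partial_i\eta)(z)\,\tfrac{u^i(x-\delta z)-u^i(x)}{\delta}\,\rho(x-\delta z)\,dz$; using that the difference quotients of $u$ converge to $\nabla u$ in $L^p$, that $\rho(\cdot-\delta z)\to\rho$ in $L^q$, and that $\int z_j(\partial_i\eta)(z)\,dz=-\delta_{ij}$, this converges in $L^1_x$ to $\rho\,\div u$ and cancels $-\rho_\delta\,\div u$. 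Thus $c_\delta(t,\cdot)\to0$ in $L^1_x$ for a.e.\ $t$, and since $\|c_\delta(t,\cdot)\|_{L^1}\le C\,\|\nabla u(t)\|_{L^p}\,\|\rho(t)\|_{L^q}$ is integrable in $t$ by \eqref{main hypotheses on u}--\eqref{main hypotheses on rho}, dominated convergence gives $c_\delta\to0$ in $L^1((0,1)\times\T^d)$.

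For the new term, writing $d_\delta=\rho_\delta\,\div(u-u_\delta)+(u-u_\delta)\cdot\nabla\rho_\delta$, the first summand tends to $0$ in $L^1$ by H\"older, because $\rho_\delta$ is bounded in $L^q$ while $\div(u-u_\delta)=\div u-(\div u)_\delta\to0$ in $L^p$. The delicate summand is $(u-u_\delta)\cdot\nabla\rho_\delta$, since $\|\nabla\rho_\delta\|_{L^q}$ blows up like $\delta^{-1}$; this is precisely where mollifying $u$ could in principle be harmful, and I expect it to be the main obstacle. The resolution I would use is the quantitative bound $\|u-u_\delta\|_{L^p}\le C\delta\,\|\nabla u\|_{L^p}$, valid for $u\in W^{1,p}$, which exactly compensates the blow-up: factoring
$$ (u-u_\delta)\cdot\nabla\rho_\delta=\delta^{-1}(u-u_\delta)\cdot\big(\delta\,\nabla\rho_\delta\big), $$
the first factor is bounded in $L^p$ by $C\|\nabla u\|_{L^p}$, while for the second I would use $\int\partial_i\eta=0$ to write $\delta\,\partial_i\rho_\delta(x)=\int(\partial_i\eta)(z)\big(\rho(x-\delta z)-\rho(x)\big)\,dz$, so that translation continuity of $\rho$ in $L^q$ gives $\delta\,\nabla\rho_\delta\to0$ in $L^q$. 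H\"older then yields $(u-u_\delta)\cdot\nabla\rho_\delta\to0$ in $L^1_x$, and the same time-integrable domination as above closes the argument.

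In the generic range $1<p,q<\infty$ all the translation-continuity steps are immediate; the endpoint exponents follow from the corresponding classical refinements (the Lipschitz bound on $u$ when $p=\infty$, and the standard difference-quotient-in-$L^1$ estimate of \cite{diperna1989ordinary} in the remaining case). The heart of the matter, and the one genuinely new point relative to the single-mollification setting, is thus reconciling the $\delta^{-1}$ growth of $\nabla\rho_\delta$ with the $O(\delta)$ smallness of $u-u_\delta$, which is exactly what the bound $\|u-u_\delta\|_{L^p}\le C\delta\|\nabla u\|_{L^p}$ together with the mean-zero cancellation of $\partial_i\eta$ provides.
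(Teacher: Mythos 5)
Your proof is correct and, at its core, relies on the same mechanism as the paper, but the bookkeeping is organized along a genuinely different decomposition. The paper first splits $r_\delta$ via the distributional identity $\div(u\rho)=\div(u)\rho+u\cdot\nabla\rho$, disposes of the divergence part \circled{1} by adding and subtracting, \emph{cites} \cite[Proposition 4.7]{ambrosio2014continuity} for the classical commutator \circled{3}$\,=\eta_\delta\ast(u\cdot\nabla\rho)-u\cdot\nabla\rho_\delta$, and concentrates its effort on \circled{4}$\,=(u-u_\delta)\cdot\nabla\rho_\delta$. You instead keep everything in divergence form, writing $r_\delta=c_\delta+d_\delta$ with $c_\delta$ the full DiPerna--Lions commutator (which you reprove from scratch via $y=x-\delta z$ and the moment identity $\int z_j\,\partial_i\eta(z)\,dz=-\delta_{ij}$, instead of citing it) and $d_\delta=\div((u-u_\delta)\rho_\delta)$, whose delicate summand $(u-u_\delta)\cdot\nabla\rho_\delta$ is exactly the paper's \circled{4}. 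Both treatments of that term rest on the same two facts: the difference quotients of $u$ at scale $\delta$ are controlled in $L^p$, and $\delta\nabla\rho_\delta$ is small in $L^q$ because $\int\nabla\eta=0$. The paper phrases this as strong convergence of \emph{both} convolution factors with vanishing limit (invoking $\int\eta(y)\,y\,dy=0$ by evenness); your version --- boundedness of $\delta^{-1}(u-u_\delta)$ in $L^p$ paired with $\delta\nabla\rho_\delta\to 0$ in $L^q$ --- is a slightly more quantitative rendering of the same cancellation, and in the range $q<\infty$ it does not even require $\eta$ to be even. What your route buys is self-containedness (no external reference for the classical part) and a cleaner split that never needs the distributional product $u\cdot\nabla\rho$; what the paper's route buys is brevity on the classical commutator.

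One caveat, at the level of a presentational looseness rather than a gap: your blanket H\"older claims degrade at the endpoint exponents in the same places the paper's do. For $p=\infty$ the assertion $\div(u-u_\delta)\to 0$ in $L^\infty$ is false in general (mollifications of an $L^\infty$ function need not converge uniformly), so the first summand of $d_\delta$ needs the split $(\rho_\delta-\rho)\div(u-u_\delta)+\rho\,\div(u-u_\delta)$ with dominated convergence on the second piece; and for $q=\infty$ (i.e.\ $p=1$) the smallness must migrate to the other factor, namely $\delta^{-1}(u-u_\delta)\to 0$ strongly in $L^1$, which is precisely where the evenness of $\eta$, i.e.\ $\int\eta(y)\,y\,dy=0$, becomes necessary after all. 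You flag both endpoints and defer to classical refinements, which matches the level of detail of the paper's own remark following the lemma, so the argument stands as proposed.
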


   \begin{proof}

        By convolution of the identity $\partial_t \rho + \div (u \rho)= 0$ with $\eta_\delta$ we get

        $$ \partial_t \rho_\delta + \div(u_\delta \rho_\delta )+ r_\delta =0 ,$$

        where $r_\delta = \eta_\delta \ast \div(u \rho) - \div (u_\delta \rho_\delta)$ and $u_\delta$, $\rho_\delta$ are defined as in \eqref{regularised velocity and density}. We recall that in distributional sense

        $$ \div(u \rho)= \div(u) \rho + u \cdot \nabla \rho,$$

        so we can rewrite $r_\delta$ as 
        
        \begin{equation}\label{definition commutator} r_\delta = \underbrace{ \eta_\delta \ast (\div(u) \rho) - \div(u_\delta ) \rho_\delta}_{= \circled{1}} + \underbrace{ \eta_\delta \ast ( u \cdot \nabla \rho) - u_\delta \cdot \nabla \rho_\delta }_{= \circled{2}}.
        \end{equation}

        As for \circled{1}, notice that

        \begin{align*}
            \eta_\delta \ast (\div(u) \rho) -\div(u_\delta) \rho_\delta  &= \eta_\delta \ast (\div(u) \rho) -\div(u) \rho + \\ & +\div(u) \rho -\div(u)\rho_\delta   + \\
            &+ \div(u) \rho_\delta -[\eta_\delta \ast \div(u)] \rho_\delta,
        \end{align*}

        and each line converges to 0 (strong) in $L^1 ((0,1) \times \T^d)$ for $\delta \to 0$. For example, for the first line we just need to use that $\div(u) \rho \in L^1 ((0,1) \times \T^d)$ to conclude, and similarly for the second and third line. As for $\circled{2}$, notice that

        $$ \circled{2}=  \underbrace{\eta_\delta \ast (u \cdot \nabla \rho) -u \cdot \nabla \rho_\delta  }_{= \circled{3}} +\underbrace{ u \cdot \nabla \rho_\delta -u_\delta \cdot \nabla \rho_\delta }_{= \circled{4}}.$$

        For \circled{4}, we will prove that for almost every $t \in (0,1)$ it holds that 
        
        $$u (t, \cdot)\cdot \nabla \rho_\delta (t, \cdot) -u_\delta (t, \cdot) \cdot \nabla \rho_\delta (t, \cdot) \to 0 \text{ as } \delta \to 0$$

        strongly in $L^1 (\T^d)$. For this reason, to keep the notation lighter we will omit writing the time dependence in the following. It holds :

        \begin{align*}
           &(u(x)- u_\delta (x)) \cdot \nabla \rho_\delta (x)\\
           &= \left(\int_{B(0,1)} \eta(y) (u(x) - u(x + \delta y)) dy \right) \cdot \left(\int_{B(0,1)} \frac{\nabla \eta (z)}{\delta } \rho(x+ \delta z) dz \right)\nonumber \\
           &= \left(\int_{B(0,1)} \eta(y) \frac{u(x) - u(x + \delta y)}{\delta} dy \right) \cdot \left( \int_{B(0,1)}\nabla \eta (y) \rho(x+ \delta y) dy \right).\nonumber 
        \end{align*}
        Integrating in the variable $x$ we have:

        \begin{align}\label{product}
            &\int_{\T^d} |(u(x)- u_\delta (x)) \cdot \nabla \rho_\delta (x)| dx \\
             &=\int_{\T^d} \left| \left(\int_{B(0,1)} \eta(y) \frac{u(x) - u(x + \delta y)}{\delta} dy \right) \cdot \left( \int_{B(0,1)}\nabla \eta (y) \rho(x+ \delta y) dy \right) \right| dx \nonumber  \\
             &\leq  \int_{\T^d} \left| \int_{B(0,1)} \eta(y) \frac{u(x) - u(x + \delta y)}{\delta} dy  \right| \; \left|   \int_{B(0,1)}\nabla \eta (y) \rho(x+ \delta y) dy  \right| dx. \nonumber 
        \end{align}

        At this point:

        \begin{equation}\left| \int_{B(0,1)} \eta(y) \frac{u(x) - u(x + \delta y)}{\delta} dy  \right| \to \left| \int_{B(0,1)} \eta(y) \nabla u(x) y dy  \right| \text{ as } \delta \to 0 \text{ in } L^p (\T^d,dx)\end{equation}

        strongly, by the strong convergence of difference quotients for Sobolev functions, and 

        \begin{equation}\label{Lq convergence} \left|   \int_{B(0,1)}\nabla \eta (y) \rho(x+ \delta y) dy  \right| \to \left|\int_{B(0,1)} \nabla \eta(y) \rho(x) dy \right| \text{ as } \delta \to 0 \text{ in } L^q (\T^d, dx).\end{equation}

        Then the product on the right hand side of \eqref{product} converges in $L^1 (\T^d, dx)$ to 

        \begin{align} \label{convergence in L1}
       \left| \nabla u(x) \int_{B(0,1)} \eta(y) y dy  \right|\;\left| \rho (x) \int_{B(0,1)} \nabla \eta(y) dy \right|=0,
        \end{align}

        because $\int_{B(0,1)} \eta(y) y dy =0$ since $\eta$ is even. So $\circled{4} \to 0$ strongly in $L^1 ((0,1) \times \T^d)$ as $\delta \to 0$. Finally, for \circled{3} we can use \cite[Proposition 4.7]{ambrosio2014continuity} to conclude.
   \end{proof}

\begin{remark}
    If in the previous Lemma we had $p=1$ and $q= +\infty$, we would not have strong convergence in the $L^\infty$ norm of \eqref{Lq convergence}. However this is not an issue, since the only convergence that really matters is the one of the product on the right hand side of \eqref{product} to \eqref{convergence in L1} in $L^1 (\T^d, dx)$, which in the case $p=1, q = + \infty$ can be achieved simply by dominated convergence.
\end{remark}

   We will now prove some lemmas regarding the ``straightening" procedure for $T$. First of all we need the following general lemma on push-forward of currents:

\begin{lemma}\label{push forward vector valued}
Let $F: \R^d \to \R^d$ be an invertible $C^1$ map with $C^1$ inverse and let $T= f_1 e_1 +  \dots f_n e_n \coloneqq (f_1 , \dots, f_n)^T $ be a 1-current of locally bounded mass in $\R^d$ with $f_1 , \dots f_n \in L^1 _{loc} (\R^d)$. Then:

\begin{equation}\label{push forward representation of vector current} F_\# T= \left( DF \cdot \frac{(f_1, \dots, f_n) ^T }{\det(DF)} \right) \circ F^{-1},
\end{equation}

with $DF$ denoting the jacobian matrix of $F$. This is, in some sense, the vector-valued case of the well-known formula:

\begin{equation}\label{push-forward formula for measures} 
F_\# (\rho \Le^n )= \frac{\rho}{\det(DF)} \circ F^{-1}  \Le^n .
\end{equation}
\end{lemma}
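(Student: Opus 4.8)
The plan is to work directly from the definition of the push-forward of a current by duality, namely $\langle F_\# T, \omega\rangle = \langle T, F^\#\omega\rangle$ for every smooth compactly supported $1$-form $\omega$, and to reduce the whole identity to the change of variables formula for the Lebesgue integral. A preliminary point I would settle first is that the pairing even makes sense: since $F$ is a global $C^1$-diffeomorphism with $C^1$ inverse, for any $\omega$ with compact support $K$ the pulled-back form $F^\#\omega$ is again compactly supported (its support lies in $F^{-1}(K)$, which is compact because $F^{-1}$ is continuous) and has bounded Borel coefficients; hence $\langle T, F^\#\omega\rangle$ is well defined for the locally-bounded-mass current $T$, and this is exactly what legitimises taking the push-forward even though $T$ need not be compactly supported.

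First I would fix a test $1$-form $\omega = \xi^j \, dx_j$ with $\xi^j \in C^\infty_c(\R^d)$ and compute its pullback explicitly. Writing $F = (F^1, \dots, F^d)$, one has $F^\#\omega = \sum_{j}(\xi^j\circ F)\, dF^j = \sum_{j,k}(\xi^j\circ F)\,\partial_k F^j\, dx_k$. Pairing this with $T = f_k e_k$ and using that $e_k$ is dual to $dx_k$ (the same convention as in Remark \ref{remark consistent defnition of current solution}), I get
$$\langle F_\# T, \omega\rangle = \int_{\R^d} \sum_{j,k} f_k(x)\,\partial_k F^j(x)\,\xi^j(F(x))\,dx = \int_{\R^d}\big\langle \xi(F(x)),\, DF(x)\,(f_1,\dots,f_d)^T(x)\big\rangle\,dx,$$
where I have recognised $\sum_k \partial_k F^j f_k = \big(DF\,(f_1,\dots,f_d)^T\big)^j$. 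The final step is the change of variables $y = F(x)$, under which $dx = |\det DF(F^{-1}(y))|^{-1}\,dy$, giving
$$\langle F_\# T, \omega\rangle = \int_{\R^d}\Big\langle \xi(y),\, \Big(\tfrac{DF\,(f_1,\dots,f_d)^T}{|\det DF|}\Big)\circ F^{-1}(y)\Big\rangle\,dy,$$
which is exactly the claimed representation \eqref{push forward representation of vector current}, read off component by component against $e_1, \dots, e_d$.

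The only genuinely delicate point, and the one I expect to be the main obstacle, is the determinant: the change of variables for Lebesgue measure produces $|\det DF|$, whereas the statement (consistently with the quoted scalar formula \eqref{push-forward formula for measures}) is written with $\det DF$. These coincide precisely when $F$ is orientation preserving, i.e. $\det DF > 0$, which is automatic in the intended application where $F$ is a time-$\tau$ flow map of the regularised field $u_\delta$ and hence isotopic to the identity; I would therefore either state this orientation hypothesis explicitly or simply read $\det DF$ as $|\det DF|$ throughout. A minor bookkeeping check I would include at the end is that the coefficients $\big(\tfrac{(DF\,(f_1,\dots,f_d)^T)^j}{|\det DF|}\big)\circ F^{-1}$ again belong to $L^1_{loc}(\R^d)$: this follows because $DF$ and $\det DF^{-1}$ are continuous, hence locally bounded, and composition with the $C^1$-diffeomorphism $F^{-1}$ together with multiplication by the (locally bounded) Jacobian preserves $L^1_{loc}$, so that $F_\# T$ is indeed a current of locally bounded mass of the same vector type as $T$.
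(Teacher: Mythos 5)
Your proof follows essentially the same route as the paper's: both unwind the definition $\langle F_\# T,\omega\rangle = \langle T, F^\#\omega\rangle$, compute the pullback coefficients $(\xi^j\circ F)\,\partial_k F^j$, and conclude by the change of variables $y=F(x)$ together with $\det(DF^{-1}) = 1/\det(DF\circ F^{-1})$. Your additional observations are sound and in fact slightly more careful than the paper, which silently writes $\det(DF)$ where the change of variables produces $|\det DF|$ --- a discrepancy you correctly resolve by noting that in the intended application $F$ is (built from) a flow map, hence orientation preserving.
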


\begin{proof}
    It is just a computation. Consider $\omega= h^i dx_i$ a 1-form (recall that repeated indices are summed). By definition of push-forward of a current

    \begin{align}
        \langle F_\# T, \omega \rangle = \langle T, F^\# \omega \rangle.
    \end{align}

    Now,

    \begin{align}
        F^{\#} \omega = h^i (F(x)) dF^i = \left( h^i (F(x)) \frac{\partial F^i}{\partial x_j} (x) \right)dx_j,
    \end{align}

    so

    \begin{align*}  
    \langle F_\# T, \omega \rangle &= \int_{\R^d} h^i (F(x)) \frac{\partial F^i}{\partial x_j} (x) f_j (x) dx\\
    &= \int_{\R^d} h^i (y) \underbrace{\frac{\partial F^i}{\partial x_j} (F^{-1} (y)) f_j (F^{-1} (y) ) |\det( F^{-1} (y))|}_{=F_\# T} dy, 
    \end{align*}

    where in the last equality we changed variables setting $y= F (x)$. Finally, noticing that $Id =D(F \circ F^{-1} (y))=DF (F^{-1} (y)) \cdot DF^{-1} (y)$ we have 
    
    $$\det(DF^{-1} (y))= \frac{1}{\det(DF(F^{-1} (y)))}$$ 
    
    by Binet and so we have the thesis.
\end{proof}

\begin{lemma}\label{lemma di raddrizzamento}
    Let
    
    $$T_\delta = \rho_\delta e_t + \rho_\delta  u_\delta ^i e_i \text{ be a } 1\text{-current in } \R^d $$
     and
     
    $$\psi_\delta (t,x) \coloneqq (t, \Phi_\delta (t,x)) ,$$
    
    with $u_\delta, \rho_\delta$ defined as in \eqref{regularised velocity and density} and $\Phi_\delta$ the flow of $u_\delta$. It holds that 

    $$ (\psi_\delta ^{-1} )_\# (T_\delta) =\left[ \left( \frac{ \rho_\delta}{\det(D_x \Phi_\delta ^{-1} )} \right) \circ \underbrace{(t, \Phi_\delta (t,x))}_{= \psi_\delta (t,x)} \right] e_t . $$
    
\end{lemma}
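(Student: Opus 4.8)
The plan is to apply the push-forward formula of Lemma~\ref{push forward vector valued} directly to $F = \psi_\delta^{-1}$ on $\R^{d+1}$, exploiting the fact that $T_\delta$ is parallel to the space-time characteristic field of $u_\delta$. The first observation is the factorisation
$$T_\delta = \rho_\delta e_t + \rho_\delta u_\delta^i e_i = \rho_\delta\,(1, u_\delta^1, \dots, u_\delta^d)^T,$$
so, viewed as a vector field on space-time, $T_\delta$ is $\rho_\delta$ times the vector $(1,u_\delta)$ tangent to the characteristics. This is the structural reason the push-forward will collapse onto the $e_t$ direction.

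Next I would compute the Jacobian of $\psi_\delta$. Since $\psi_\delta(t,x) = (t,\Phi_\delta(t,x))$ and $\Phi_\delta$ is the flow of $u_\delta$, i.e. $\partial_t \Phi_\delta(t,x) = u_\delta(t,\Phi_\delta(t,x))$ with $\Phi_\delta(0,\cdot)=\mathrm{id}$, the space-time Jacobian has the block form
$$D\psi_\delta = \begin{pmatrix} 1 & 0 \\ \partial_t \Phi_\delta & D_x \Phi_\delta \end{pmatrix},$$
so that $\det D\psi_\delta = \det(D_x\Phi_\delta)$ and, crucially, $D\psi_\delta\cdot e_t = (1,\partial_t\Phi_\delta)^T = (1, u_\delta(t,\Phi_\delta(t,x)))^T$. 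In other words $\psi_\delta$ maps the vertical direction $e_t$ exactly onto the characteristic direction, which is equivalent to the cancellation identity $[D\psi_\delta]^{-1}(1, u_\delta\circ\psi_\delta)^T = e_t$.

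Then I would feed this into Lemma~\ref{push forward vector valued} with $F=\psi_\delta^{-1}$ (hence $F^{-1}=\psi_\delta$). Writing a generic target point as $a=(t,x)$ and setting $b=\psi_\delta(a)$, the inverse function theorem gives $D(\psi_\delta^{-1})(b) = [D\psi_\delta(a)]^{-1}$ and $\det D(\psi_\delta^{-1})(b) = \det(D_x\Phi_\delta^{-1})(b)$. The numerator of the push-forward formula then simplifies by the cancellation above:
$$D(\psi_\delta^{-1})(b)\cdot T_\delta(b) = \rho_\delta(b)\,[D\psi_\delta(a)]^{-1}(1, u_\delta(b))^T = \rho_\delta(b)\, e_t.$$
Dividing by $\det D(\psi_\delta^{-1})(b)$ and composing with $F^{-1}=\psi_\delta$ (the factor $\circ F^{-1}$ in \eqref{push forward representation of vector current}) yields precisely
$$(\psi_\delta^{-1})_\#(T_\delta) = \left[\left(\frac{\rho_\delta}{\det(D_x\Phi_\delta^{-1})}\right)\circ\psi_\delta\right]e_t.$$

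Finally I would check the regularity needed to invoke Lemma~\ref{push forward vector valued}. Since $u_\delta$ is smooth and bounded in space for a.e.\ $t$, each $\Phi_\delta(t,\cdot)$ is a $C^1$ diffeomorphism of $\T^d$ whose Jacobian $\det D_x\Phi_\delta$ is positive and bounded away from $0$ on $[0,1]$ (via the Liouville relation $\partial_t\det D_x\Phi_\delta = (\div u_\delta\circ\Phi_\delta)\det D_x\Phi_\delta$), so $\psi_\delta$ is a bi-Lipschitz diffeomorphism of space-time. The one genuine subtlety, and the point I would treat most carefully, is the low time-regularity of $u_\delta$ (only $L^1$ in $t$): $\psi_\delta$ is merely Lipschitz, not $C^1$, in the time variable, so the push-forward formula and the identity $D\psi_\delta\cdot e_t = (1,u_\delta\circ\psi_\delta)^T$ must be read with Jacobians defined for a.e.\ $t$; everything else is the bookkeeping of evaluation points, namely distinguishing composition with $\psi_\delta$ from composition with $\psi_\delta^{-1}$.
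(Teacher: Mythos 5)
Your proposal is correct and follows essentially the same route as the paper: both apply Lemma~\ref{push forward vector valued} with $F=\psi_\delta^{-1}$ and exploit that the space-time Jacobian of the straightening map sends $e_t$ to the characteristic direction $(1,u_\delta)$ --- your cancellation identity $[D\psi_\delta]^{-1}(1,u_\delta\circ\psi_\delta)^T=e_t$ is precisely the paper's relation $\partial_t\Phi_\delta^{-1}+D_x\Phi_\delta^{-1}\,(u_\delta\circ\psi_\delta)=0$, which it obtains by differentiating $\Phi_\delta^{-1}(t,\Phi_\delta(t,x))=x$ and encodes in a block-matrix factorisation of $D\psi_\delta^{-1}$. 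The only differences are presentational (you compute $D\psi_\delta$ and invert via the inverse function theorem where the paper computes $D\psi_\delta^{-1}$ directly), and your closing observation that $\psi_\delta$ is merely Lipschitz in time, so the Jacobian identities hold only for a.e.\ $t$, is a genuine subtlety the paper leaves implicit.
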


\begin{proof}

By Lemma \ref{push forward vector valued} it holds the representation \eqref{push forward representation of vector current} with $F= \psi_\delta ^{-1}$. We have:

$$ D\psi_\delta ^{-1} (\psi_\delta (t,x))= 
\left[ \begin{array}{c|c}
 1 & 0 \\
 \hline \\
    \partial_t \Phi_\delta ^{-1} (t, \Phi_\delta (t,x)) & D_x \Phi_\delta ^{-1} (t, \Phi_\delta (t,x))
\end{array}\right]
$$

and since $\Phi_\delta ^{-1} (t,\Phi_\delta(t,x))= x$ for every $(t,x)$ and $\partial_t \Phi_\delta (t,x)= u_\delta (t, \Phi_\delta (t,x))$:

\begin{align} 
0 &= \frac{d}{dt}( \Phi_\delta ^{-1} (t,\Phi_\delta(t,x))) = \partial_t \Phi_\delta ^{-1} (t,\Phi_\delta(t,x)) + D_x \Phi_\delta ^{-1} (t,\Phi_\delta(t,x)) \partial_t \Phi_\delta (t,x)=  \\
&=\partial_t \Phi_\delta ^{-1} (t,\Phi_\delta(t,x)) + D_x \Phi_\delta ^{-1} (t,\Phi_\delta(t,x)) u_\delta (t, \Phi_\delta (t,x)) .\nonumber 
\end{align}

Then we can rewrite:

$$D\psi_\delta ^{-1} (\psi_\delta (t,x))= 
\left[ \begin{array}{c|c}
 1 & 0 \\
 \hline \\
    0 & D_x \Phi_\delta ^{-1} (t, \Phi_\delta (t,x)) 
    \end{array} \right]
\left[ \begin{array}{c|c}
 1 & 0 \\
 \hline \\
    -u_\delta (t, \Phi_\delta (t,x)) & Id  
\end{array}\right] $$

and by Lemma \ref{push forward vector valued}:

\begin{align*} (\psi_\delta ^{-1} )_\# (T_\delta) &= \left( \frac{1}{\det(D_x \Phi_\delta ^{-1} )} \left[ \begin{array}{c|c}
 1 & 0 \\
 \hline \\
    0 & D_x \Phi_\delta ^{-1}  
    \end{array} \right]
\left[ \begin{array}{c|c}
 1 & 0 \\
 \hline \\
    -u_\delta  & Id  
\end{array}\right]  \left[ \begin{array}{c}
      \rho_\delta\\
      \rho_\delta u_\delta
\end{array}\right]\right) \underbrace{(t, \Phi_\delta (t,x))}_{= \psi_\delta (t,x)} = \\
&= \left[\left(\frac{\rho_\delta }{\det (D_x \Phi_\delta ^{-1})} \right) \circ (t, \Phi_\delta (t,x) \right]e_t.
\end{align*}

\end{proof}

We are now ready to conclude.

\begin{theorem}{\textbf{Main Result}}\label{uniqueness of current with 0 boundary}

   Let $T=(\rho , \rho u)$ be a 1-current solving \eqref{continuity equation with current} in the sense of Definition \ref{definition solution sense of currents} with $u ,\rho$ satisfying \eqref{main hypotheses on u}, \eqref{main hypotheses on rho} and $\rho_0 =0$. Then $T=0$.
\end{theorem}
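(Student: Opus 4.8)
The plan is to prove that $\F(T)=0$, which forces $T=0$ since the flat norm is a genuine norm, and this is exactly the statement $\rho=0$. First I would pass to the regularized current $T_\delta=\rho_\delta e_t+\rho_\delta u_\delta^i e_i$ of \eqref{regularised current} and write
$\F(T)\le \F(T-T_\delta)+\F(T_\delta)\le \M(T-T_\delta)+\F(T_\delta)$.
The first term is harmless: $\rho_\delta\to\rho$ in $L^1((0,1)\times\T^d)$, and splitting $\rho_\delta u_\delta-\rho u=\rho_\delta(u_\delta-u)+(\rho_\delta-\rho)u$, both pieces vanish in $L^1$ using $\|\rho_\delta\|_{L^q}\le\|\rho\|_{L^q}$, $u\in L^\infty$, $u_\delta\to u$ in $L^1(0,1;L^p)$ and $1/p+1/q=1$; hence $\M(T-T_\delta)\to0$ and it remains to show $\F(T_\delta)\to0$.

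The key object is the boundary of $T_\delta$. Mollifying the equation and using $\rho_\delta(0,\cdot)=\eta_\delta\ast\rho_0=0$, the computation of Remark \ref{remark consistent defnition of current solution} applied to $T_\delta$ gives $\partial T_\delta=r_\delta$, an $L^1$ $0$-current supported in $(0,1)\times\T^d$, with $\|r_\delta\|_{L^1}\to0$ by Lemma \ref{lemma commutator estimate}. Now I would straighten: by Lemma \ref{lemma di raddrizzamento} the current $\tilde T_\delta:=(\psi_\delta^{-1})_\#T_\delta$ is purely horizontal, and since push-forward commutes with $\partial$ its boundary is $g_\delta:=(\psi_\delta^{-1})_\#r_\delta$. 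Because $\psi_\delta$ fixes the time slices and the push-forward of a measure by a diffeomorphism preserves total variation, $g_\delta$ is an $L^1$ $0$-current carrying no mass on $\{t=0\}$ with $\|g_\delta\|_{L^1}=\|r_\delta\|_{L^1}$.

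Next I would feed $g_\delta$ into Lemma \ref{construction of 1-horizontal current with given boundary} to obtain a horizontal $M_\delta$ with $\partial M_\delta=g_\delta$ and $\M(M_\delta)\le\|g_\delta\|_{L^1}$. Then $\tilde T_\delta-M_\delta$ is horizontal with null boundary, so Lemma \ref{flat norm estimate null boundary} bounds its flat norm by its vertical mass, which is $0$; hence $\tilde T_\delta=M_\delta$ and $\M(\tilde T_\delta)\le\|r_\delta\|_{L^1}$. This is the ``straightened'' picture of the introduction taken to the extreme: the vertical part is literally zero and the entire current is controlled by the small source. Finally I would transport back. Writing $\tilde T_\delta=\theta_\delta e_t$ and applying Lemma \ref{push forward vector valued} with $F=\psi_\delta$, the current $(\psi_\delta)_\#(\theta_\delta e_t)$ has density $\theta_\delta(\det D\psi_\delta)^{-1}(1,u_\delta(t,\Phi_\delta))\circ\psi_\delta^{-1}$; taking its mass and changing variables, the Jacobian $\det D\psi_\delta=\det D_x\Phi_\delta$ cancels and $\M(T_\delta)\le(1+\|u\|_\infty)\M(\tilde T_\delta)\le(1+\|u\|_\infty)\|r_\delta\|_{L^1}\to0$. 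Therefore $\F(T_\delta)\le\M(T_\delta)\to0$, giving $\F(T)=0$ and $T=0$.

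The step I expect to be delicate — and where the hypotheses are genuinely used — is this last transport-back estimate. A priori one fears that pushing forward by $\psi_\delta$ costs the Lipschitz constant of the flow $\Phi_\delta$, which is \emph{not} controlled uniformly in $\delta$: only $\div(u)^-\in L^1(0,1;L^\infty)$ is assumed, which bounds $\det D_x\Phi_\delta$ from below but says nothing about $D_x\Phi_\delta$ itself. The resolution is that after straightening the current is horizontal, so only the single column $D\psi_\delta(e_t)=(1,u_\delta(t,\Phi_\delta))$ enters the push-forward; the full Jacobian matrix never appears and the determinant cancels in the mass integral, so the cost is merely $|D\psi_\delta(e_t)|\le 1+\|u\|_\infty$. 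It is precisely the assumption $u\in L^\infty$ that tames the otherwise uncontrolled geometry of the regularized flow, which is also why the scheme is built on $T_\delta$ (mollifying $u$ as well as $\rho$) rather than on $T$ directly.
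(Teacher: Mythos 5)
Your proposal is correct and follows essentially the same route as the paper: mollify both $\rho$ and $u$, straighten $T_\delta$ via $(\psi_\delta^{-1})_\#$ (Lemma \ref{lemma di raddrizzamento}), identify the straightened current with the horizontal current built from the pushed-forward commutator by Lemmas \ref{construction of 1-horizontal current with given boundary} and \ref{flat norm estimate null boundary}, and push back paying only a factor $1+\|u\|_{L^\infty}$ thanks to the cancellation of the Jacobian determinant in the mass integral. The only (cosmetic, and in fact slightly simpler) deviation is at the start, where you bound $\F(T-T_\delta)\leq \M(T-T_\delta)$ directly, using also $\rho_\delta \to \rho$ in $L^1((0,1)\times \T^d)$, whereas the paper inserts the auxiliary horizontal current $R_\delta$ so that $T-T_\delta+R_\delta$ has null boundary and only the vertical mass $\|\rho u - \rho_\delta u_\delta\|_{L^1((0,1)\times\T^d)}$ needs to vanish.
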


\begin{proof}
Fix $\delta >0$ and consider $u_\delta, \rho_\delta$ as in \eqref{regularised velocity and density} and $T_\delta \coloneqq \rho_\delta e_t +  \rho_\delta u_\delta ^i e_i$. We have:

\begin{equation}\label{final splitting of T} T= \underbrace{(T-T_\delta + R_\delta)}_{\circled{1}} + \underbrace{T_{\delta}}_{\circled{2}}  - \underbrace{R_\delta}_{\circled{3}} .
\end{equation}

with $R_\delta$ constructed as in Lemma \ref{construction of 1-horizontal current with given boundary} from the boundary $\partial T_\delta = r_\delta$, where the boundary is considered in $(- \infty, 1) \times \T^d$. By construction $\circled{1}$  has null boundary in $(- \infty , 1) \times \T^d$ and by Lemma \ref{flat norm estimate null boundary} its flat norm can be estimated by its vertical mass, i.e.

    $$ \F(T- T_\delta - R_\delta) \leq ||\rho u - \rho_\delta u_\delta||_{L^1 ((0,1) \times \T^d)} \to 0 \text{ as } \delta \to 0.$$

As for $\circled{3}$, its mass norm can be estimated by $||r_\delta||_{L^1 ((0,1) \times \T^d)}$, which we know to tend to $0$ as $\delta \to 0$ by Lemma \ref{lemma commutator estimate}. So we only need to prove that 

$$\F(T_\delta) \to 0 \text{ as }\delta \to 0.$$

In order to do that, we consider the push-forward of $T_\delta$ via $\psi_\delta ^{-1}$, where we recall that $\psi_\delta (t,x)= (t, \Phi_\delta (t,x))$, and by Lemma \ref{lemma di raddrizzamento} we have

$$ (\psi_\delta ^{-1} )_\# T_\delta = \left[\left(\frac{\rho_\delta }{\det (D_x \Phi_\delta ^{-1})} \right) \circ (t, \Phi_\delta (t,x)) \right]e_t.$$

Consider

$$\tilde{r}_\delta \coloneqq \partial (\psi_\delta ^{-1} )_\#  T_\delta= (\psi_\delta ^{-1} )_\# \partial T_\delta = (\psi_\delta ^{-1} )_\# r_\delta  .$$

Using the representation formula for push-forward  measures (see \eqref{push-forward formula for measures}) we can easily see that $||\tilde{r}_\delta ||_{L^1 ((0,1) \times \T^d)}=||r_\delta ||_{L^1 ((0,1) \times \T^d)}$, indeed for every $t \in (0,1)$ fixed:

\begin{align}\label{r tilde norm equal to r norm}
    &\int_{\T^d} |\tilde{r}_\delta| (t,x) dx = \int_{\T^d} \frac{|r_\delta| (t, \Phi_\delta (t,x))}{\det (D_x \Phi_\delta ^{-1} (t, \Phi_\delta (t,x)))} dx = \\
    &=\int_{\T^d} \frac{|r_\delta| (t,y)}{\det (D_x \Phi_\delta ^{-1} (t, y)) \det (D_x \Phi_\delta  (t, \Phi_\delta ^{-1} (t,y)))} dy
    =\int_{\T^d} |r_\delta|(t,y) dy,\nonumber 
\end{align}

where we used the change of variables $y= \Phi_\delta (t,x)$ and the fact that

$$\det (D_x \Phi_\delta ^{-1} (t, y) )  \det (D_x \Phi_\delta  (t, \Phi_\delta ^{-1} (t,y)))= \det (\underbrace{D_x \Phi_\delta ^{-1} (t, y) \cdot  D_x \Phi_\delta  (t, \Phi_\delta ^{-1}(t,y))}_{= Id})\equiv 1 .$$

By Lemma \ref{construction of 1-horizontal current with given boundary}, $\exists P_\delta$ a horizontal $1$-current with mass controlled by $||\tilde{r}_\delta||_{L^1 ((0,1) \times \T^d)} = ||r_\delta||_{L^1 ((0,1) \times \T^d)}$ and such that $\partial P_\delta = \tilde{r}_\delta $. By construction $(\psi_\delta)^{-1} _\# T_\delta - P_\delta$ has null boundary in $(- \infty ,1) \times  \T^d$ and it is completely horizontal , so by Lemma \ref{flat norm estimate null boundary} we have: 

$$ \F ((\psi_\delta)^{-1} _\# T_\delta - P_\delta )= 0 \implies (\psi_\delta)^{-1} _\# T_\delta- P_\delta =0 \implies T_\delta = (\psi_\delta)_\# P_\delta.$$

Then:

$$\F(T_\delta) \leq \M ( T_\delta )= \M((\psi_\delta )_\# P_\delta).$$

We can now calculate $\M((\psi_\delta )_\# P_\delta)$. Consider:

\begin{itemize}
    \item A $1$-form $\omega (t,x)= \tau (t,x) dt + \xi ^j (t,x) dx_j$ with $\tau, \xi^j \in C_c ^\infty ([0,1) \times \T^d)$ such that $|\tau|, |\xi^j |\leq 1$ for every $j$;

    \item A function $f \in L^1 ((0,1) \times \T^d)$ such that $P_\delta (t,x)= f(t,x) dt$. We refer the reader to Lemma \ref{construction of 1-horizontal current with given boundary} for the precise definition of $f$, for which it holds $||f||_{L^1 ((0,1) \times \T^d)} \leq ||\tilde{r}_\delta||_{L^1 ((0,1) \times \T^d)} = ||r_\delta||_{L^1 ((0,1) \times \T^d)} $  (see \eqref{r tilde norm equal to r norm}) by construction.
\end{itemize}
Then:

\begin{align*}
(\psi_\delta )_\# P_\delta (\omega)= P_\delta ((\psi_\delta)^\# \omega)= P_\delta (\tau (\psi_\delta (t,x)) d \psi_\delta ^t + \xi^j (\psi_\delta (t,x)) d \psi_\delta ^j),
\end{align*}

where with $\psi_\delta ^t$ we denote the component of $\psi_\delta$ in the direction $e_t$ and with $\psi_\delta ^j$ the compontents of $\psi_\delta$ in the directions $e_j$. Since $P_\delta$ is completely horizontal, only the partial derivatives with respect to the time variable will matter for the computation, i.e.

\begin{align*} 
&P_\delta (\tau (\psi_\delta (t,x)) d \psi_\delta ^t + \xi^j (\psi_\delta (t,x)) d \psi_\delta ^j)\\
&= P_\delta \left(\left( \tau(\psi_\delta (t,x)) + \sum_{j=1} ^{d} u_\delta ^j (\psi_\delta (t,x)) \xi^j (\psi_\delta (t,x))  \right) dt \right)\\
&= \int_0 ^1 \int_{\T^d} \left( \tau(\psi_\delta (t,x))  + \sum_{j=1} ^{d} u_\delta ^j (\psi_\delta (t,x)) \xi^j (\psi_\delta (t,x))\right) f (t,x) dx dt,
\end{align*}

where we used that $\partial_t \psi_\delta ^j (t,x) = u^j (t, \Phi_\delta (t,x))= u^j (\psi_\delta (t,x))$ for every $j= 1, \dots, d$. But since $u \in L^\infty ((0,1) \times \T^d)$ and by assumption $|\tau| , |\xi ^j| \leq 1$ for every $j=1, \dots, d$ we have that

$$ \F(T_\delta) \leq \M(T_\delta) = \M( (\psi_\delta )_\# P_\delta ) \lesssim \M(P_\delta ) = ||r_\delta ||_{L^1 ((0,1) \times \T^d)} \to 0 \text{ as } \delta \to 0,$$

and we conclude since, by choosing $\delta$ small enough, we can make the flat norm of $T$ arbitrarily small.

\end{proof}

\begin{remark}
    The previous result can actually be extended to the case  of a velocity field $ u \in [L^1 (0,1; BV(\T^d)) \cap L^\infty ((0,1) \times \T^d)]^d$. The only difference will be in the proof of Lemma \ref{lemma commutator estimate} where the choice of a standard mollifier $\eta$ will not be enough to ensure strong convergence to 0 in $L^1 ((0,1) \times \T^d)$. This can be fixed by employing ad hoc local anisotropic mollifiers, although the procedure is more involved. For the details, see \cite{ambrosio2004transport}.
\end{remark}
\section{Appendix}

In order to keep this work as self contained as possible, we give here the necessary definitions. Since we mainly work with $1$-currents, we will only define them, but the interested reader may find more general definitions in \cite{federer2014geometric} or \cite{simon1983lectures}. In the following we will always assume $U, V \subset \R^d$ open sets.

\begin{definition}{\textbf{1-currents}}

    A $1$-current on $U$ is a continuous linear functional on the space $\mathcal{D}^1 (U)$ of $1$-differential forms on $U$. That is, $1$-currents are elements of the dual of $\mathcal{D}^1 (U)$, and this set is denoted as $\mathcal{D}_1 (U)$.
\end{definition}
\newpage
\begin{definition}{\textbf{Pull-back and push forward}}

    Let $\omega(x) = \sum_{1 \leq i \leq d} \xi_i(x)  dx_i $ be a $1$- differential form in $\mathcal{D}^1 (V)$, with $\xi_i \in C^\infty _c (V)$ for every $i$, and let $\phi : U \mapsto V$ be a smooth map. Then we denote the pull-back of $\omega$ via $\phi$ as $\phi^\# \omega \in \mathcal{D}^1 (U)$, which is defined as 
    
    $$\phi ^\# \omega (x)\coloneqq \sum_{1\leq i \leq d } \xi_i( \phi(x)) \;d \phi_i.$$

    Similarly, given a $1$- current $T \in \mathcal{D} _1 (U)$, we denote the push-forward of $T$ via $\phi$ as $\phi_\# T \in \mathcal{D}_1 (V)$, and it is defined as

    $$ \phi_\# T( \omega) \coloneqq T (\phi^\# \omega) \quad \text{ for all } \omega \in \mathcal{D} ^1 (V).$$
\end{definition}

\begin{definition}{\textbf{Boundary of a 1-current}}

Let $T \in \mathcal{D}_1 (U)$. We define the boundary of $T$ as a distribution on $U$ (i.e. an element of the dual space of $C^\infty _c (U)$) as

$$ \partial T (\xi ) \coloneqq T(d \xi) \quad \text{ for all } \xi \in C^\infty _c (U).$$
    
\end{definition}

\begin{remark}[Properties of push-forward and boundary, and of pull-back and differential]

It can be verified that the operations of pull-back and differential commute, i.e if we consider a differential form $\omega$ and a pull-back via a suitable map $\phi$, then

$$ \phi^\# (d\omega) = d (\phi^\# \omega).$$

Similarly, the boundary operator commutes with the push-forward for currents, i.e. for every current $T$ and for every suitable map $\phi$ it holds that

$$ \phi_\# (\partial T) = \partial (\phi_\# T) .$$
\end{remark}

\begin{definition}{\textbf{Mass of a 1-current}}

Let $T \in \mathcal{D}_1 (U)$. We define the mass of $T$ as 

$$ \M (T)= \sup_{\substack{\omega \in \mathcal{D}^1 (U) \text{ s.t.} \\|\xi_i |\leq 1 \text{ for every } i}} T(\omega), \quad \text{ with } \omega(x) = \sum_{1 \leq i \leq d} \xi_i (x) dx_i . $$

If $S$ is a distribution in $U$ , then we define its mass as

$$ \M(S) \coloneqq \sup_{ \substack{\xi \in C_c ^\infty (U) \text{ s.t.}\\ |\xi| \leq 1}} S(\xi).$$
    
\end{definition}

\begin{definition}{\textbf{Normal 1-currents and diffuse 1-currents}}

We say that a $1$-current $T \in \mathcal{D}_1 (U)$ is normal if 

$$ \M(T) + \M(\partial T) < + \infty.$$

Moreover, we say that a $1$-current $T$ is diffused if it can be represented as 

$$ T= \sum_{1 \leq i \leq d} f_i e_i,$$

 with $f_i \in L^1 _{loc} (U)$  for every $i$. In this case, the action of $T$ on a $1$-form \newline $\omega(x)= \sum_{1 \leq i \leq d} \xi_i (x) dx_i \in \mathcal{D}^1 (U)$ is

$$ T(\omega)= \int_U \left(\sum_{1 \leq i \leq d} f_i (x) \xi_i (x) \right) dx.$$
\end{definition}

\begin{definition}{\textbf{Flat norm}}

    Let $ T \in \mathcal{D}_1 (U)$. We define the flat norm of $T$ as 

    $$ \F(T) \coloneqq \inf \{\M (S) + \M (L) \text{ s.t. } T= \partial S + L\}.$$

    In particular, we point out that $S$ is a $2$-current (see \cite{federer2014geometric}, \cite{simon1983lectures}) and that $\F$ is a norm, so $\F(T)=0 \implies T=0$.
\end{definition}

\section{Acknowledgements}

The author wishes to thank prof. Giovanni Alberti and prof. Luigi Ambrosio for the constructive discussions and the precious help.

    \bibliographystyle{plain}
    \bibliography{bibliography.bib}

\end{document}